\documentclass[11pt]{amsart}
\usepackage{amsmath}
\usepackage{amsthm}
\usepackage{amssymb}

\usepackage{float}
\usepackage{hyperref}
\usepackage{fullpage}
\usepackage{graphicx}
\usepackage{caption}
\usepackage{subcaption}

\usepackage{color}

\newtheorem{theo}{Theorem}[section]
\newtheorem{prop}[theo]{Proposition}
\newtheorem{lemma}[theo]{Lemma}
\newtheorem{coro}[theo]{Corollary}

\newcommand{\eps}{\varepsilon}

\title{Hitting $k$ primes by dice rolls}

\author[Alon]{Noga Alon}
\address[N.~Alon]{Department of Mathematics,
Princeton University, Princeton, NJ 08544.}
\email{\href{mailto:nalon@math.princeton.edu}{nalon@math.princeton.edu}}

\author[Malinovsky]{Yaakov Malinovsky}
\address[Y.~Malinovsky]{Department of Mathematics and Statistics,
University of Maryland, Baltimore County, Baltimore, MD  21250}
\email{\href{mailto:yaakovm@umbc.edu}{yaakovm@umbc.edu}}

\author[Martinez]{Lucy Martinez}
\address[L.~Martinez]{Department of Mathematics, Rutgers University,
Piscataway, NJ 08854}
\email{\href{mailto:lucy.martinez@rutgers.edu}{lucy.martinez@rutgers.edu}}

\author[Zeilberger]{Doron Zeilberger}
\address[D.~Zeilberger]{Department of Mathematics,
Rutgers University, Piscataway, NJ 08854}
\email{\href{mailto:doronzeil@gmail.com}{doronzeil@gmail.com}}

\begin{document}

\begin{abstract}
Let $S=(d_1,d_2,d_3, \ldots )$ be an infinite sequence of rolls of
independent fair dice. For an integer $k \geq 1$, let $L_k=L_k(S)$
be the smallest $i$ so that there are $k$ integers $j \leq i$ for which
$\sum_{t=1}^j d_t$ is a prime. Therefore, $L_k$ is the random variable
whose value is the number of dice rolls required until the
accumulated sum
equals a prime $k$ times. It is known
that the expected value of $L_1$ is close to $2.43$. Here we
show that for large $k$, the expected value of $L_k$ is $(1+o(1)) k\log_e k$,
where the $o(1)$-term tends to zero as $k$ tends to infinity. We also
include some computational results about the distribution of
$L_k$ for $k \leq 100$.
\end{abstract}

\maketitle

\noindent
{\bf Keywords}: characteristic polynomial, Chernoff inequality, combinatorial probability, hitting time, Prime Number Theorem.
\smallskip

\noindent
{\bf MSC2020 subject classifications}: 60C05, 11A41, 60G40.

\section{Results}
Let $S=(d_1,d_2,d_3, \ldots )$ be an infinite sequence of rolls of
independent fair dice. Thus the $d_i$ are independent, identically
distributed random variables, each uniformly distributed on the
integers $\{1,2, \ldots ,6\}$. For each $i \geq 1$ put
$s_i=\sum_{j=1}^{i} d_j$. The sequence $S$ {\em hits}  a positive
integer $x$ if there exists an $i$ so that $s_i=x$. In that case
it hits $x$ in step $i$.

For any positive integer $k$, let $L_k=L_k(S)$ be the random variable
whose value is the smallest $i$ so that the sequence $S$ hits
$k$ primes during the first $i$ steps ($\infty$ if there is no such
$i$, but it is easy to see that with probability $1$ there is such
$i$). The random variable $L_1$ is introduced and studied
in \cite{AM}, see also  \cite{MZ}, \cite{SC} for
several variants and generalizations.

Here we consider the random variable $L_k$ for larger values of
$k$, focusing on the estimate of its expectation.

\subsection{Computational results}
This article is accompanied by a Maple package {\tt PRIMESk},
available from

{\tt  https://sites.math.rutgers.edu/\~{}zeilberg/mamarim/mamarimhtml/primesk.html} \quad,

where there are also numerous output files.

Using our Maple package, we computed the following
values of the expectation of $L_k$ for $k \leq 30$.

\vspace{0.2cm}

\begin{table}[H]
\begin{tabular}{|c|c|c|c|c|c|}
\hline
$k$  & $E(L_k)$ & $k$  & $E(L_k)$ & $k$  & $E(L_k)$\\
\hline
$1$ & $2.428497914$ & $11$ & $48.14320555$ & $21$ & $106.3962997$\\ \hline
$2$ & $5.712240468$ & $12$ & $53.61351459$ & $22$ & $112.5650207$\\ \hline
$3$ & $9.498878119$ & $13$ & $59.16406655$ & $23$ & $118.7684092$\\ \hline
$4$ & $13.65059271$ & $14$ & $64.79337350$ & $24$ & $125.0081994$\\ \hline
$5$ & $18.05408931$ & $15$ & $70.50517127$ & $25$ & $131.2881683$\\ \hline
$6$ & $22.64615402$ & $16$ & $76.30284161$ & $26$ & $137.6114097$\\ \hline
$7$ & $27.42115902$ & $17$ & $82.18566213$ & $27$ & $143.9783110$\\ \hline
$8$ & $32.37752852$ & $18$ & $88.14757626$ & $28$ & $150.3859881$\\ \hline
$9$ & $37.50029903$ & $19$ & $94.17811256$& $29$ & $156.8292462$ \\ \hline
$10$ & $42.76471868$ & $20$ & $100.2648068$ & $30$ & $163.3025173$\\ \hline
\end{tabular}
\end{table}

\vspace{0.2cm}

\noindent
The table suggests that the asymptotic value of
this expectation is $(1+o(1))k \log k$, where the $o(1)$-term tends to
zero as $k$ tends to infinity, and the logarithm here and throughout the
manuscript is in the natural basis. This is confirmed in the
results stated in the next subsection
and proved in Section \ref{sec:proofs}.

%
\noindent
The value of the standard deviation of $L_k$ for $k \leq 30$ is given in the
following table.

\vspace{0.2cm}

\begin{table}[H]
\begin{tabular}{|c|c|c|c|c|c|}
\hline
$k$  & $Std(L_k)$ & $k$  & $Std(L_k)$ & $k$  & $Std(L_k)$\\
\hline
$1$ & $2.4985553$ & $11$ & $14.9184147$ & $21$ & $23.3873070$\\ \hline
$2$ & $4.2393979$ & $12$ & $15.8185435$ & $22$ & $24.0816339$\\ \hline
$3$ & $5.7679076$ & $13$ & $16.7109840$ & $23$ & $24.7769981$\\ \hline
$4$ & $7.1185391$ & $14$ & $17.6115574$ & $24$ & $25.4821834$\\ \hline
$5$ & $8.3598784$ & $15$ & $18.5197678$ & $25$ & $26.1952166$\\ \hline
$6$ & $9.5715571$ & $16$ & $19.4227324$ & $26$ & $26.9055430$\\ \hline
$7$ & $10.7618046$ & $17$ & $20.3022748$ & $27$ & $27.5997195$\\ \hline
$8$ & $11.9062438$ & $18$ & $21.1419697$ & $28$ & $28.2678482$\\ \hline
$9$ & $12.9824596$ & $19$ & $21.9329240$ & $29$ & $28.9080719$ \\ \hline
$10$ & $13.9823359$ & $20$ & $22.6771846$ & $30$ & $29.5276021$\\ \hline
\end{tabular}
\end{table}

\vspace{0.2cm}

\noindent
The value of the skewness of $L_k$ for $k \leq 30$ is given in the
following table.

\vspace{0.2cm}

\begin{table}[H]
\begin{tabular}{|c|c|c|c|c|c|}
\hline
$k$  & $ Skew(L_k)$ & $k$  & $ Skew(L_k)$ & $k$  & $ Skew(L_k)$\\
\hline
$1$ & $3.3904247$ & $11$ & $0.7569428$ & $21$ & $0.5205173$\\ \hline
$2$ & $2.1496468$ & $12$ & $0.7362263$ & $22$ & $0.5148284$\\ \hline
$3$ & $1.6420771$ & $13$ & $0.7250716$ & $23$ & $0.5134409$\\ \hline
$4$ & $1.3892778$ & $14$ & $0.7131387$ & $24$ & $0.5108048$\\ \hline
$5$ & $1.2554076$ & $15$ & $0.6939289$ & $25$ & $0.5029053$\\ \hline
$6$ & $1.1503502$ & $16$ & $0.6657344$ & $26$ & $0.4888319$\\ \hline
$7$ & $1.0474628$ & $17$ & $0.6307374$ & $27$ & $0.4707841$\\ \hline
$8$ & $0.9487703$ & $18$ & $0.5936550$ & $28$ & $0.4528198$\\ \hline
$9$ & $0.8625227$ & $19$ & $0.5601812$ & $29$ & $0.4391145$ \\ \hline
$10$ & $0.7974496$ & $20$ & $0.5351098$ & $30$ & $0.4324204$\\ \hline
\end{tabular}
\end{table}

\vspace{0.2cm}

\noindent
The value of the kurtosis of $L_k$ for $k \leq 30$ is given in the
following table.
\vspace{0.2cm}

\begin{table}[H]
\begin{tabular}{|c|c|c|c|c|c|}
\hline
$k$  & $Ku(L_k)$ & $k$  & $Ku(L_k)$ & $k$  & $Ku(L_k)$\\
\hline
$1$ & $20.6214485$ & $11$ & $3.9630489$ & $21$ & $3.4553514$\\ \hline
$2$ & $10.0475452$ & $12$ & $3.9427896$ & $22$ & $3.4675149$\\ \hline
$3$ & $7.2098904$ & $13$ & $3.9031803$ & $23$ & $3.4566369$\\ \hline
$4$ & $6.1044828$ & $14$ & $3.8308431$ & $24$ & $3.4199435$\\ \hline
$5$ & $5.5085380$ & $15$ & $3.7314241$ & $25$ & $3.3679599$\\ \hline
$6$ & $5.0273441$ & $16$ & $3.6223695$ & $26$ & $3.3183350$\\ \hline
$7$ & $4.6151697$ & $17$ & $3.5254483$ & $27$ & $3.2873677$\\ \hline
$8$ & $4.2993763$ & $18$ & $3.4590869$ & $28$ & $3.2835481$\\ \hline
$9$ & $4.0978890$ & $19$ & $3.4312823$ & $29$ & $3.3051186$ \\ \hline
$10$ & $3.9989275$ & $20$ & $3.4359883$ & $30$ & $3.3414988$\\ \hline
\end{tabular}
\end{table}

We end this section with some figures and a table of the {\bf scaled}
probability density functions for the number of rolls of a fair die
until visiting the primes $k$ times for various $k$ values.
(Recall that the scaled version of a random variable $X$ with
expectation $\mu$ and variance $\sigma^2$ is $(X-\mu)/\sigma$).

\begin{table}[H]
\begin{tabular}{|c|c|c|c|c|}
\hline
$k$  & Expectation &  Standard Deviation  & Skewness &  Kurtosis \\
\hline
$20$ & $100.2648068$ & $22.6771846$ & $0.5351098$ & $3.4359883$ \\ \hline
$40$ & $229.8903783$ & $36.1271902$ & $0.3777949$ & $3.1278526$ \\ \hline
$60$ & $370.5241578$ & $46.0245135$ & $0.1406763$ & $2.6164507$\\ \hline
$80$ & $520.2899340$ & $57.8152360$ & $0.2910580$ & $2.9707515$\\ \hline
$100$ & $676.3153763$ & $65.2765933$ & $0.2230411$ & $3.0704308$\\ \hline
\end{tabular}
\end{table}

\begin{figure}[H]
     \centering
     \begin{subfigure}[b]{0.3\textwidth}
         \centering
         \includegraphics[width=\textwidth]{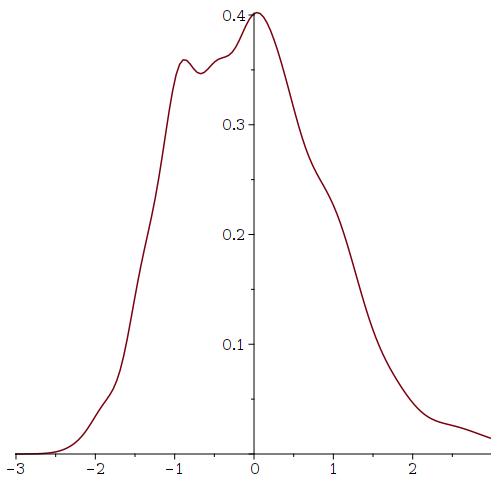}
         \caption*{$k=20$}
     \end{subfigure}
     \begin{subfigure}[b]{0.3\textwidth}
         \centering
         \includegraphics[width=\textwidth]{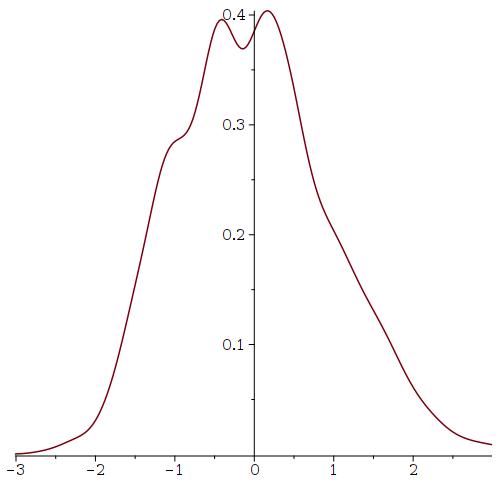}
         \caption*{$k=40$}
     \end{subfigure}

     \hspace{1in}

      \begin{subfigure}[b]{0.3\textwidth}
         \centering
         \includegraphics[width=\textwidth]{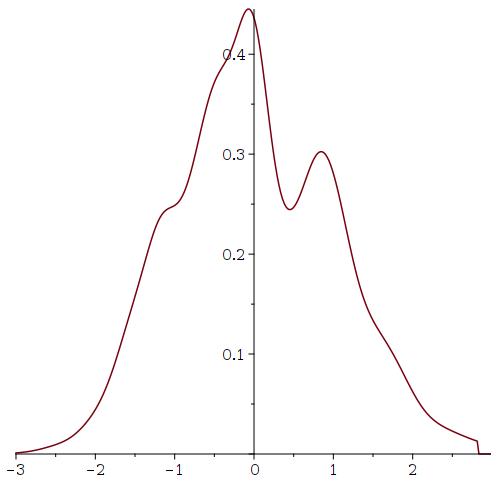}
         \caption*{$k=60$}
     \end{subfigure}
      \begin{subfigure}[b]{0.3\textwidth}
         \centering
         \includegraphics[width=\textwidth]{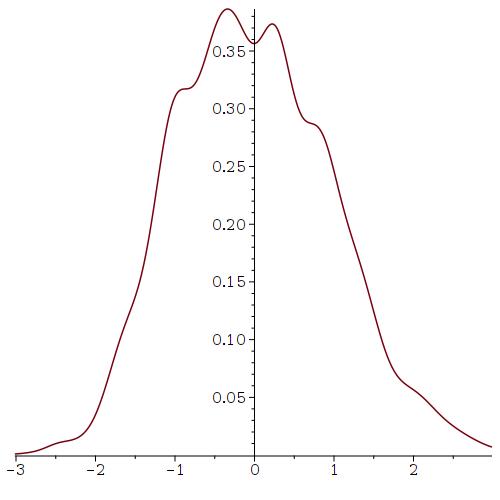}
         \caption*{$k=80$}
     \end{subfigure}
     \begin{subfigure}[b]{0.3\textwidth}
         \centering
         \includegraphics[width=\textwidth]{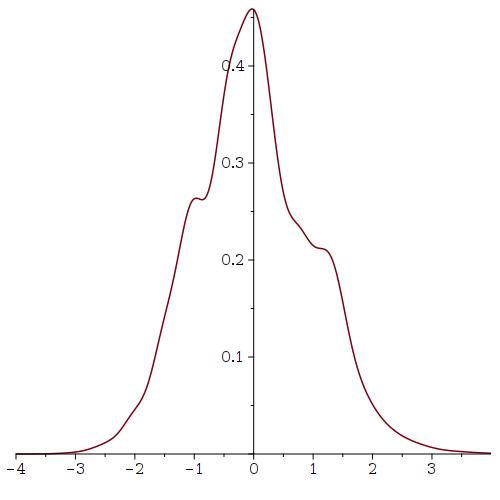}
         \caption*{$k=100$}
     \end{subfigure}
        \caption{Scaled probability density function for the
number of rolls of a fair die until visiting the primes $k$ times.}
        \label{fig:scaled prob pics}
\end{figure}
\vspace{0.2cm}

\noindent
Based on the available data above, the argument described
in the next section, and the known results about the
function $\pi(n)$ which is the number of primes that
do not exceed $n$, a possible guess
for a more precise expression for $E(L_k)$ may be
$k (\log  k + \log \log k +c_1)+c_2$.
This is also roughly consistent with the computational evidence.

\subsection{Asymptotic results}
In the next section we prove the following two results.
\begin{theo}
\label{t11}
For any fixed positive reals $\eps, \delta$
there exists $k_0=k_0(\eps, \delta)$ so that for all
$k>k_0$ the probability that $|L_k-k \log k| > \eps k \log k$
is smaller than $\delta$.
\end{theo}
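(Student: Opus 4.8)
The plan is to decouple the problem into two nearly independent pieces: counting how many primes have been hit by the time the running sum reaches a given value $N$, and translating that value back into a number of rolls. Write $\mu = \mathbb{E}(d_1) = 7/2$ and, for a value $x$, let $X_x$ be the indicator that the walk hits $x$ (that is, $s_i = x$ for some $i$), and set $P(N) = \sum_{p \le N,\ p \text{ prime}} X_p$. Since the walk is strictly increasing with increments in $\{1,\dots,6\}$, the number of rolls needed to reach a hit value $N$ is, by the Chernoff bound for the bounded i.i.d. increments, $(1+o(1)) N/\mu = (2/7+o(1))N$ with probability tending to $1$; moreover $L_k$ is exactly the number of rolls needed to reach $N_k$, the value at which the $k$-th prime is hit. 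Hence it suffices to show that $N_k = (1+o(1))\frac{7}{2} k \log k$ with high probability, for then $L_k = (2/7)(1+o(1)) N_k = (1+o(1)) k\log k$.

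I would next analyze the hitting probabilities $h_x = \Pr(X_x = 1)$. Conditioning on the last hit before $x$ gives the linear recurrence $h_x = \frac16\sum_{j=1}^6 h_{x-j}$ (the six events ``$x-j$ is hit and the following roll equals $j$'' are disjoint and exhaust $\{x\text{ hit}\}$), with $h_0 = 1$ and $h_x = 0$ for $x<0$. Its characteristic polynomial $6t^6 - (t^5 + t^4 + \cdots + 1)$ has $t=1$ as a simple root and all remaining roots strictly inside the unit disk, so $h_x = 2/7 + O(\rho^x)$ for some $\rho<1$, the constant $2/7 = 1/\mu$ being forced by the renewal theorem. Summing and invoking the Prime Number Theorem, $\mathbb{E}(P(N)) = \sum_{p\le N} h_p = (2/7)\pi(N) + O(1) = (2/7+o(1))\,N/\log N$.

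For concentration I would bound the covariances $\mathrm{Cov}(X_p, X_q)$. For $q > p$, conditioning on the state of the walk near $p$ and using the same spectral gap (the residual/overshoot process is a finite-state ergodic chain that forgets its initial state geometrically) one gets $\Pr(X_q = 1 \mid X_p = 1) = 2/7 + O(\rho^{q-p})$, whence $\mathrm{Cov}(X_p,X_q) = O(\rho^{|p-q|})$ and $\mathrm{Var}(P(N)) = O(\pi(N))$. Chebyshev then yields $P(N) = (1+o(1))\mathbb{E}(P(N))$ with probability $\to 1$. Applying this at $N^{\pm} = (1\pm\eps_1)\frac{7}{2} k\log k$ gives $\mathbb{E}(P(N^{\pm})) = (1\pm\eps_1)(1+o(1))k$ (since $\log N^{\pm} = (1+o(1))\log k$), so with high probability $P(N^-) < k < P(N^+)$; by monotonicity of $P$ this pins $N_k$ into the window $[N^-,N^+]$, i.e. $N_k = (1+o(1))\frac{7}{2} k\log k$. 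Combining with the roll-count estimate of the first paragraph (via a union bound over the short window, and monotonicity in $N$) gives $|L_k - k\log k| \le \eps\, k\log k$ with probability at least $1-\delta$ once $k$ is large, as required.

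The main obstacle is the concentration step: the hitting indicators are not independent, and a naive bounded-differences estimate on the rolls fails, because altering a single roll shifts the entire tail of the walk and can change the hit/miss status of arbitrarily many primes. The crux is therefore to extract genuine geometric decay of correlations from the renewal structure --- equivalently, a spectral gap for the transfer matrix underlying the recurrence above, so that $\mathrm{Var}(P(N))$ grows only linearly in $\pi(N)$. A secondary technical point is carrying out the value-to-rolls conversion at the random level $N_k$ uniformly over the window $[N^-, N^+]$, which is handled by monotonicity together with a union bound.
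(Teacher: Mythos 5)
Your proposal is correct, and it reaches the theorem by a genuinely different route at the key concentration step. The first half is the same as the paper's: the renewal recurrence $h_x=\frac16\sum_{j=1}^6 h_{x-j}$, the simple root at $1$ of the characteristic polynomial with all other roots inside the unit disk, and the resulting $h_x=2/7+O(\rho^x)$ are exactly the paper's Lemma \ref{l21}. Where you diverge is in how you turn this into concentration of the number of primes hit. The paper proves an approximate product formula for the joint hit/miss probabilities of points separated by gaps of order $\log g$ (Lemma \ref{l22}), splits the primes up to $m$ into $c\log n$ sparse subsequences, applies Chernoff to each, and takes a union bound, obtaining exponential tails $e^{-\Omega(a^2/(n\log n))}$ (Proposition \ref{prop}). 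You instead observe that the strong Markov property gives $\Pr(X_q=1\mid X_p=1)=p(q-p)$ exactly, hence $\mathrm{Cov}(X_p,X_q)=O(\rho^{q-p})$, so $\mathrm{Var}(P(N))=O(\pi(N))$, and you finish with Chebyshev. This is simpler and entirely sufficient for Theorem \ref{t11}, since convergence in probability only needs the deviation probability to tend to $0$; your bound $O(1/(\eps_1^2 k))$ does that. What the paper's heavier machinery buys is the exponential decay needed for Theorem \ref{t12}: there one must sum $P(L_k\ge i)$ over an infinite tail of $i$, and a Chebyshev-type bound of order $1/\pi(N)$ summed over $i$ diverges, so your second-moment method would have to be strengthened (higher moments, or the paper's decoupling) to recover the expectation result. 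Your final assembly---pinning the value $N_k$ of the $k$-th prime hit into $[(1-\eps_1)\frac72 k\log k,(1+\eps_1)\frac72 k\log k]$ via the Prime Number Theorem and then converting values to roll counts by Chernoff on $d_1+\cdots+d_i$ with a union bound over the window---is a transposed but equivalent version of the paper's Corollary \ref{c25}, which works directly with the roll index $i$; both are correct.
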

\begin{theo}
\label{t12}
For any fixed $\eps>0$  and any $k> k_0(\eps)$, the
expected value of the random variable $L_k$  satisfies
$|E(L_k)- k \log k| < \eps k \log k$.
\end{theo}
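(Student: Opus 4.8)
The plan is to reparametrize by the \emph{height} of the walk rather than the number of steps, and to exploit the fact that the set of integers visited, $V=\{s_1,s_2,\dots\}$, is exactly a renewal process whose inter-arrival gaps $g_j=s_j-s_{j-1}$ are i.i.d.\ uniform on $\{1,\dots,6\}$ with mean $\mu=7/2$. For a positive integer $m$ set $I_m=\mathbf 1[m\in V]$ and let $A_m=\min\{s_j:s_j\ge m\}-m\in\{0,\dots,5\}$ be the forward recurrence time, so $I_m=\mathbf 1[A_m=0]$. The sequence $(A_m)$ is a finite, irreducible, aperiodic Markov chain (from state $0$ the next gap is fresh and uniform, from state $j>0$ one moves deterministically to $j-1$); its stationary distribution is $\phi(j)=(6-j)/21$, so $\Pr[A_m=0]\to 2/7=1/\mu$, the classical renewal density. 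The eigenvalues of the $6\times 6$ transition matrix are the roots of its characteristic polynomial, and the second largest eigenvalue modulus $\rho<1$ governs geometric mixing, giving $\mathrm{Cov}(I_m,I_{m'})=O(\rho^{|m-m'|})$.

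Writing $H(N)=\sum_{p\le N}I_p$ for the number of primes hit up to height $N$, the first step is the mean estimate. Fast convergence to stationarity gives $\Pr[I_p=1]=2/7+O(\rho^{p})$ uniformly, so $E[H(N)]=(2/7+o(1))\,\pi(N)$, and by the Prime Number Theorem $E[H(N)]=(1+o(1))\,\frac{2N}{7\log N}$. I would then fix the height at which this mean equals $k$: solving $(2/7)N/\log N=k$ yields $N=(1+o(1))\tfrac72 k\log k$ and $\log N=(1+o(1))\log k$. The second step is concentration of $H(N)$. For Theorem~\ref{t11} a variance bound suffices: geometric decay of covariances gives $\mathrm{Var}(H(N))\le \sum_{p}\sum_{p'}|\mathrm{Cov}(I_p,I_{p'})|=O(\pi(N))=O(E[H(N)])$, since for each $p$ the covariances are geometrically summable and there is at most one prime per integer, whence Chebyshev forces $H(N)=(1+o(1))E[H(N)]$ with probability $1-o(1)$.

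To pass from heights back to steps I would use that $s_i=\tfrac72 i+O(\sqrt{i})$ and that the number of steps to reach height $N$, namely $J(N)=\#\{j:s_j\le N\}$, satisfies $J(N)=(2/7)N+O(\sqrt N)$, each with exponentially small failure probability by the standard Chernoff bound for the i.i.d.\ gaps. Combining the monotonicity of $H$ with these two-sided estimates at the heights $M^\pm=\tfrac72(1\pm\eps/2)k\log k$, where $E[H(M^\pm)]=(1\pm\eps/2)(1+o(1))k$, shows that with probability $1-\delta$ the value of $H$ at the height reached by step $(1+\eps)k\log k$ exceeds $k$ while the value at the height reached by step $(1-\eps)k\log k$ falls short of $k$; the $O(\sqrt{k\log k})$ height fluctuations are negligible against $\eps k\log k$. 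This places the hitting of the $k$-th prime between steps $(1-\eps)k\log k$ and $(1+\eps)k\log k$ with probability $1-\delta$, proving Theorem~\ref{t11}.

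For Theorem~\ref{t12} the lower bound $E[L_k]\ge(1-\eps)(1-o(1))k\log k$ is immediate from Theorem~\ref{t11}, so the work is the upper bound. I would write $E[L_k]=\int_0^\infty \Pr[L_k>t]\,dt$ and split at $t_0=(1+\eps)k\log k$; the part below $t_0$ contributes at most $t_0$, and it remains to show $\int_{t_0}^\infty \Pr[L_k>t]\,dt=o(k\log k)$. Here the variance bound is too weak, and I would instead obtain a genuine exponential tail from the transfer-matrix form of the moment generating function $E[e^{\lambda H(N)}]$: stepping through $m=1,\dots,N$ one multiplies the phase-transition matrix, inserting a factor $e^{\lambda}$ on the state-$0$ weight exactly at prime positions, so that $E[e^{\lambda H(N)}]$ is a matrix product whose growth is controlled by the (perturbed) characteristic polynomial. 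Because primes have density $1/\log N$ and each contributes stationary mass $2/7$ at state $0$, this yields the sub-Poissonian estimate $E[e^{\lambda H(N)}]\le \exp\!\big((1+o(1))(e^{\lambda}-1)E[H(N)]\big)$ and hence $\Pr[H(N)\le(1-\eta)E[H(N)]]\le e^{-\Omega(\eta^2 E[H(N)])}$; since the typical height at step $t$ is $\approx\tfrac72 t$, the mean $E[H]$ there is of order $t/\log t\gg k$ once $t\gg k\log k$, so $\Pr[L_k>t]$ decays exponentially in $k$ and the tail integral is negligible. The main obstacle throughout is precisely the dependence among the indicators $I_p$: the naive bounded-difference approach fails because altering one gap shifts all later renewal points and can flip the status of many primes, so the concentration must be routed through the spectral/mixing properties of the phase chain (its characteristic polynomial) rather than through McDiarmid-type inequalities; a secondary but genuine technical point is the careful interchange between the height and step parametrizations.
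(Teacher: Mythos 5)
Your overall architecture is sound and, at the top level, parallels the paper's: both arguments rest on (i) the hitting probability of a large integer converging to $2/7$ at an exponential rate (your forward-recurrence-time chain $(A_m)$ with stationary mass $\phi(0)=2/7$ is exactly the renewal-theoretic reformulation of the paper's linear recurrence for $p(x)$; the two share the same characteristic polynomial $z^6-\frac16(z^5+\cdots+1)$), (ii) the Prime Number Theorem to locate the height $N=(1+o(1))\frac72 k\log k$ at which $\frac27\pi(N)=k$, (iii) concentration of the number of primes hit up to height $N$, and (iv) Chernoff for the walk's position to translate between heights and steps, followed by the decomposition of $E[L_k]=\sum_i P(L_k\ge i)$ into a main range contributing $(1+o(1))k\log k$ and a tail contributing $o(k\log k)$. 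Where you genuinely diverge is in step (iii). For Theorem \ref{t11} your second-moment argument (geometric decay of $\mathrm{Cov}(I_p,I_{p'})$, hence $\mathrm{Var}(H(N))=O(\pi(N))$, hence Chebyshev) is simpler than the paper's route, which splits the primes into $c\log n$ subsequences with pairwise gaps $\ge c\log n$, shows near-independence within each subsequence (Lemma \ref{l22}), and applies Chernoff to each; your observation that Chebyshev is too weak for the tail integral in Theorem \ref{t12} is also correct.

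The one genuine soft spot is the exponential lower-tail bound you need for that integral. You assert the sub-Poissonian estimate $E[e^{\lambda H(N)}]\le\exp\bigl((1+o(1))(e^{\lambda}-1)E[H(N)]\bigr)$ ``because primes have density $1/\log N$,'' but that is precisely where the difficulty sits: the primes are not uniformly spaced, and for the many pairs or clusters of primes within distance $o(\log N)$ of one another the phase chain has no time to mix, so the heuristic that each prime contributes an independent stationary factor fails locally and the top eigenvalue of the perturbed matrix product is not controlled by the stated one-line argument. This is exactly the obstruction the paper's subsequence-splitting device is designed to absorb, at the cost of a $\log$ factor in the exponent: it yields $P\bigl(|H(N)-\frac27\pi(N)|\ge a\bigr)\le e^{-c'a^2/(\pi(N)\log\pi(N))}$ rather than $e^{-\Omega(a^2/\pi(N))}$. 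Your argument is rescued by the fact that this weaker, $\log$-lossy bound is entirely sufficient for the tail integral (as the paper's treatment of $S_2$ shows, with deviations growing like $a_j=jk$ the tail sums to $o(1)$), so you should either import the subsequence trick into your transfer-matrix computation or simply claim the weaker exponent; as written, the unqualified sub-Poissonian bound is not justified.
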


\section{Proofs}
In all proofs we omit all floor and ceiling signs  whenever these are not
crucial, in order to simplify the presentation.

\label{sec:proofs}
\begin{lemma}
\label{l21}
There are fixed positive $C$ and
$\mu,\, 0<\mu<1$ so that the following holds.
Let $S=(d_1,d_2, \ldots)$ be a random sequence of independent
rolls of fair dice.
For any positive integer $x$, let $p(x)$ denote the probability
that $S$ hits $x$. Then $|p(x)-2/7| \leq C(1-\mu)^x$, that is,
as $x$ grows, $p(x)$ converges to the constant $2/7$ with an
exponential rate.
\end{lemma}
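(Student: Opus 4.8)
The plan is to set up and solve the linear recurrence satisfied by $p(x)$, then read off both the limiting constant $2/7$ and the exponentially small error from the location of the roots of its characteristic polynomial. Write $s_i=d_1+\cdots+d_i$ for the strictly increasing walk, so that every positive integer is visited at most once. A level $x\ge 1$ is reached exactly when, for the unique predecessor level $x-j$ with $j\in\{1,\ldots,6\}$ from which the walk steps onto $x$, the walk first hits $x-j$ and the next roll is $j$. The six events ``hit $x-j$, then roll $j$'' are disjoint (the arrival at $x$ is from a single predecessor), and the roll following the unique visit to $x-j$ is independent of the past and uniform, so each has probability $\frac16 p(x-j)$. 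Summing gives the recurrence
\[
p(x)=\frac{1}{6}\sum_{j=1}^{6}p(x-j),\qquad x\ge 1,
\]
with the conventions $p(0)=1$ and $p(y)=0$ for $y<0$. (A quick check: $p(1)=1/6$ and $p(2)=7/36$, matching a direct count.)

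The next step is to analyze the characteristic polynomial of this recurrence. Substituting $p(x)=\lambda^x$ gives $6\lambda^6=1+\lambda+\cdots+\lambda^5$, i.e. the characteristic polynomial is $Q(\lambda)=6\lambda^6-\lambda^5-\lambda^4-\lambda^3-\lambda^2-\lambda-1$. One checks $Q(1)=0$ and $Q'(1)=36-15=21\ne 0$, so $\lambda=1$ is a simple root. The crucial claim, which I expect to be the main obstacle, is that every other root lies strictly inside the unit disk. To prove it, suppose $\lambda$ is a root with $|\lambda|\ge 1$. Then
\[
6|\lambda|^6=\Big|\sum_{j=0}^{5}\lambda^j\Big|\le\sum_{j=0}^{5}|\lambda|^j\le 6|\lambda|^5,
\]
the last inequality because $|\lambda|^j\le|\lambda|^5$ for $0\le j\le 5$ whenever $|\lambda|\ge 1$; hence $|\lambda|\le 1$, forcing $|\lambda|=1$. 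Equality in the triangle inequality then requires $\lambda^0,\ldots,\lambda^5$ to share a common argument, and since $\lambda^0=1$ that argument is $0$, whence $\lambda=1$. Thus $\lambda=1$ is the only root of modulus $\ge 1$, and all remaining roots have modulus at most some $1-\mu<1$.

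Finally, write the general solution of the recurrence as $p(x)=c_1+\sum_{i\ge 2}P_i(x)\lambda_i^x$, where the $\lambda_i$ are the roots of modulus below $1$ and the $P_i$ are polynomials (constant if the roots are simple) encoding any multiplicities. Each correction term is $O\big((1-\mu)^x\big)$ after slightly enlarging the base to absorb polynomial factors, so summing and renaming constants yields the desired bound $|p(x)-c_1|\le C(1-\mu)^x$. It remains to identify $c_1=\lim_{x\to\infty}p(x)$. Using the generating function $G(z)=\sum_{x\ge 0}p(x)z^x$, the recurrence gives $G(z)\big(1-\phi(z)\big)=1$, where $\phi(z)=\frac16(z+\cdots+z^6)$ is the probability generating function of a single roll; hence $G(z)=1/(1-\phi(z))$ has a simple pole at $z=1$ with residue $1/\phi'(1)$, and since $\phi'(1)=\frac16\sum_{j=1}^{6}j=\tfrac72$ this residue, which equals $\lim_{x}p(x)$, is exactly $2/7$. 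Equivalently, $c_1=2/7$ is the reciprocal of the mean step size, as the elementary renewal theorem predicts for the aperiodic increment distribution on $\{1,\ldots,6\}$. Combining with the previous paragraph gives $|p(x)-2/7|\le C(1-\mu)^x$, as claimed.
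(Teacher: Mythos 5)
Your proof is correct, and its skeleton --- derive the renewal recurrence $p(x)=\frac{1}{6}\sum_{j=1}^{6}p(x-j)$, pass to the characteristic polynomial, isolate the simple root at $1$ from the roots of smaller modulus, and read off $p(x)=c_1+O((1-\mu)^x)$ --- is exactly the paper's. You diverge in two places, both to your credit or at least harmlessly. First, you actually prove that every root other than $1$ lies strictly inside the unit disk (via $6|\lambda|^6\le\sum_{j=0}^{5}|\lambda|^j\le 6|\lambda|^5$ and the equality case of the triangle inequality), where the paper only asserts this is ``easy to check''; you also allow for possible multiple roots by inserting polynomial factors, which the paper's formula $\sum_{j=2}^{6}c_j\lambda_j^i$ tacitly ignores. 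Second, and this is the genuine methodological difference, you identify $c_1=2/7$ algebraically: $G(z)=1/(1-\phi(z))$ has $\lim_{z\to 1}(1-z)G(z)=1/\phi'(1)=2/7$, i.e.\ the reciprocal of the mean step, as renewal theory predicts. The paper instead argues probabilistically: it sums the estimate to get $\sum_{i=1}^{n}p(i)=c_1 n+O(1)$, interprets the left side as the expected number of integers in $[n]$ that are hit, and uses concentration of $d_1+\cdots+d_f$ around $7f/2$ to show this expectation is $(2/7+o(1))n$. Your generating-function route is shorter and self-contained; the paper's route previews the concentration machinery it reuses later. One cosmetic nit: the quantity $\lim_{z\to1}(1-z)G(z)$ is the negative of the residue of $G$ at $z=1$, so calling it ``the residue'' is loose, but the computed value $2/7$ is the correct coefficient of the constant mode.
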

\begin{proof}
Define $p(-5)=p(-4)=p(-3)=p(-2)=p(-1)=0$, $p(0)=1$ and note
that for every $i \geq 1$,
$$
p(i)=\frac{1}{6}\sum_{j=1}^6 p( i-j).
$$
Indeed, $S$ hits $i$ if and only if the last number it hits
before $i$ is $i-j$ for some $j \in \{1, \ldots ,6\}$, and the
die rolled after that gives the value $j$. The probability of
this event for each specific value of $j$ is $p(i-j) \cdot (1/6)$,
providing the equation above. (Note that the definition of the initial
values is consistent with this reasoning, as before any dice rolls
the initial sum is $0$).  Thus, the sequence $(p(i))$
satisfies the homogeneous linear recurrence relation given above. The
characteristic polynomial of that is
$$
P(z)=z^6-\frac{1}{6}(z^5+z^4+z^3+z^2+z+1).
$$
One of the roots of this polynomial is $z=1$, and its multiplicity is
$1$ as the derivative of $P(z)$ does not vanish at $1$. It is also
easy to check that the absolute value of each of the other roots
$\lambda_j$, $2 \leq j \leq 6$
of $P(z)$ is at most $1-\mu$ for some absolute positive constant
$\mu,\, 0<\mu<1$. Therefore, there are constants $c_j$ so that
$$
p(i)=c_1 \cdot 1^i+\sum_{j=2}^6 c_j \lambda_j^i,
$$
implying that
$$
|p(i)-c_1| \leq C (1-\mu)^i
$$
for some absolute constant $C$. It remains to compute the value of
$c_1$. By the last estimate, for any positive $n$,
$$
|\sum_{i=1}^n p(i) -c_1 n| \leq C/(1-\mu).
$$
Note that the sum $\sum_{i=1}^n p(i)$ is the expected number of
integers in $[n]=\{1,2, \ldots,n\}$ hit by the sequence $S$.

For each fixed $f$, $d_1+d_2+\cdots+d_f$ is a sum of $f$ independent
identically distributed
random variables, each uniform on $\left\{1,2,\ldots,6\right\}$.
By the standard estimates for the distribution of sums of independent
bounded random variables, see., e.g., \cite{AS}, Theorem A.1.16,
this sum is very close
to $7f/2$ with high probability. Therefore for large $n$
the expectation considered above is $(1+o(1))(2/7)n$.
Dividing by $n$ and taking the limit as $n$ tends to infinity
shows that $c_1=2/7$, completing the proof.
\end{proof}

Note that the lemma above implies that there exists an absolute
positive constant $c$ so that for any (large) integer $g$ the
following holds:
\begin{equation}
\label{e21}
\mbox{For any}~~ x \geq c \log g-5, ~~
p(x)=\frac{2}{7}e^{\eps_1(x)},
1-p(x)=\frac{5}{7}e^{\eps_2(x)}~~ \mbox{where}~~
	|\eps_1(x)|< 1/g, |\eps_2(x)| \leq 1/g.
\end{equation}
It will be convenient to apply this estimate later.
\vspace{0.2cm}

Let $Y_m(S)$ denote the number of primes in $[m]=\{1,2,\ldots ,m\}$ hit by
$S$. In the next lemma we use the letters $H$ and $N$ to represent
"hit" and "not-hit", respectively.
\begin{lemma}
\label{l22}
For any sequence of integers $1 \leq x_1<x_2<\cdots<x_g$
that satisfy $x_1 \geq c \log g$ and
$x_{i+1}-x_i\geq c\log g$ for all $1 \leq i \leq g-1$,
where $c$ is the constant from (\ref{e21}),
and for every
$\nu \in \left\{H,N\right\}^{g}$ the following holds.
Let $h$ be the number of $H$ coordinates of $\nu$. Then,
\begin{align*}
&
P\left(S\,\,\, \text{hits}\,\,\, x_i\,\,\, \text{iff}\,\,\,
\nu_i=H\right)=
\left(\frac{2}{7}\right)^h\left(\frac{5}{7}\right)^{g-h}e^{\eps(\nu)},
\end{align*}
where $|\eps(\nu)|\leq 1.$
\end{lemma}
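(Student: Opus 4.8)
The plan is to exploit the strong Markov property of the partial-sum walk $(s_j)$ to factor the joint probability into a telescoping product of single-target conditional probabilities, and then to control each factor with the uniform exponential estimate~(\ref{e21}). For $1 \le i \le g$, let $\tau_i$ be the first step at which $s_j \ge x_i$, and let $O_i = s_{\tau_i}-x_i$ be the \emph{overshoot}. Since $s_{\tau_i-1}<x_i$ and each die roll is at most $6$, we have $O_i \in \{0,1,2,3,4,5\}$, and $S$ hits $x_i$ if and only if $O_i=0$. The separation hypothesis $x_{i+1}-x_i \ge c\log g$ guarantees, for large $g$, that $x_i+O_i \le x_i+5 < x_{i+1}$, so the first passages occur in the order $x_1,\dots,x_g$ and the overshoots are well defined sequentially.

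Next I would write the joint probability as
\[
P\bigl(S \text{ hits } x_i \text{ iff } \nu_i=H\bigr)=\prod_{i=1}^{g} q_i, \qquad q_i := P\bigl(\nu_i \text{ holds at } x_i \mid \nu_1,\dots,\nu_{i-1}\bigr).
\]
Each event $\nu_i$ is determined by time $\tau_i$ (it records whether $O_i=0$), so the conditioning event $\{\nu_1,\dots,\nu_i\}$ is measurable with respect to the walk up to $\tau_i$. By the strong Markov property at $\tau_i$, the future event at $x_{i+1}$ and the overshoot $O_{i+1}$ depend on this past only through $s_{\tau_i}=x_i+O_i$; by translation invariance the conditional probability of hitting $x_{i+1}$ equals $p\bigl((x_{i+1}-x_i)-O_i\bigr)$, with $p(x_1)$ playing this role for the first target. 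Because $O_i \le 5$ and $x_{i+1}-x_i \ge c\log g$, the argument satisfies $(x_{i+1}-x_i)-O_i \ge c\log g-5$, so~(\ref{e21}) applies and yields, uniformly over the value of $O_i$, a hit probability of the form $\tfrac27 e^{\eps_1}$ with $|\eps_1|<1/g$ and a miss probability $\tfrac57 e^{\eps_2}$ with $|\eps_2|\le 1/g$.

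The uniformity over $O_i$ is the crucial point. Conditioning on the complicated event $\{\nu_1,\dots,\nu_{i-1}\}$ only reweights the distribution of $O_{i-1}$, so each factor $q_i$ is a convex combination, over the possible overshoot values, of numbers all lying in the interval $[\tfrac27 e^{-1/g},\tfrac27 e^{1/g}]$ (when $\nu_i=H$) or $[\tfrac57 e^{-1/g},\tfrac57 e^{1/g}]$ (when $\nu_i=N$), and such an average stays in that same interval. Hence $q_i=\tfrac27 e^{\delta_i}$ for a hit and $q_i=\tfrac57 e^{\delta_i}$ for a miss, with $|\delta_i|\le 1/g$ in either case. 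Multiplying the $g$ factors and collecting the $h$ hits and $g-h$ misses gives $\bigl(\tfrac27\bigr)^h\bigl(\tfrac57\bigr)^{g-h}e^{\eps(\nu)}$ with $\eps(\nu)=\sum_{i=1}^{g}\delta_i$, so that $|\eps(\nu)|\le g\cdot(1/g)=1$, as required.

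I expect the main obstacle to be the clean verification of the Markov reduction: that conditioning on the full hit/miss pattern up to $x_{i-1}$ influences the next factor only through the single overshoot variable, and that no first passage ever skips a target (which is exactly what the gap lower bound secures). Once that reduction is in place, the probabilistic content collapses to the single uniform estimate~(\ref{e21}), and the accumulation of the $g$ small multiplicative errors into one bounded exponent is entirely routine.
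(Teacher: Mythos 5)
Your proposal is correct and follows essentially the same route as the paper: the paper also factors the joint probability into a telescoping product of conditional probabilities, conditions each factor on the first value hit in $\{x_{i-1},\dots,x_{i-1}+5\}$ (your overshoot $O_{i-1}$), applies the uniform estimate~(\ref{e21}) to $p(x_i-x_{i-1}-j)$ via the law of total probability, and accumulates the $g$ multiplicative errors of size $1/g$ into a single exponent bounded by $1$. Your write-up merely makes the strong Markov reduction slightly more explicit; no substantive difference.
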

\begin{proof}
The probability of
the event $\left(S \,\,\, \text{hits}\,\,\, x_i\,\,\, \text{iff}\,\,\,
\nu_i=H\right)$ is a product of $g$ terms. The first term is the
probability that
$S$ hits $x_1$ (if $\nu_1=H$) or the probability that
$S$ does not hit $x_1$ (if  $\nu_1=N$). Note that since
$x_1 > c \log g$ this probability is
$\frac{2}{7}e^{\eps_1}$ in the first case and
$\frac{5}{7}e^{\eps_2}$ in the second case, where
both  $|\eps_1|$ and $|\eps_2|$ are at most $1/g$.

The second term in the product is the conditional probability that
$S$ hits $x_2$ (if $\nu_2=H$), or that it does not
hit $x_2$ (if $\nu_2=N$),
given the first value it hit in the interval
$x_1,x_1+1, \ldots, x_1+5$. If $\nu_1=H$, this first value
is $x_1$ itself, and then the probability to hit $x_2$
is exactly $p(x_2-x_1)$. If $\nu_1=N$, then this first value
is one of the $5$ possibilities $x_1+j$ for some $1 \leq j \leq 5$.
Subject to hitting $x_1+j$, the conditional probability to hit $x_2$
is exactly $p(x_2-x_1-j)$, which by the assumption on the
difference $x_2-x_1$, is very close to $\frac{2}{7}$.
By the law of total probability it follows that in any case
the conditional probability to hit $x_2$ is
$\frac{2}{7} e^{\eps'}$ and the conditional probability
not to hit it is $\frac{5}{7}e^{\eps"}$ where the absolute
value of $\eps'$ and of $\eps"$ is at most $1/g$.
Continuing in this manner we get a product of
$g$ terms, $h$ of which are very close to $2/7$ and
$g-h$ are very close to $5/7$, where the product of all
error terms $e^{\eps'''}$ is of the form $e^{\eps}$
for some $|\eps| \leq g \cdot (1/g) =1$. This completes
the proof of the lemma.
\end{proof}

\begin{prop}
\label{prop}
For any sequence $x_1<x_2<\cdots<x_n$ of positive integers
and any $a\geq \sqrt{n}\log(n)$
\begin{align*}
&
P\left(\Big|\# x_i\,\,\,\, \text{hit}\,\,\,-\frac{2}{7}n\Big |
\geq a\right)\leq e^{-c'\frac{a^2}{n\log(n)}},
\end{align*}
for some absolute positive constant $c'$.
\end{prop}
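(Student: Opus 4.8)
The plan is to reduce the proposition to the near-independent regime governed by Lemma \ref{l22}. The obstacle is that the events ``$S$ hits $x_i$'' are genuinely dependent and, worse, the $x_i$ carry no spacing hypothesis, so Lemma \ref{l22} does not apply to them directly. I would overcome this by splitting the index set into a small number of classes on which the chosen $x_i$ are \emph{automatically} well separated, proving a one-class Chernoff bound via Lemma \ref{l22}, and then recombining the classes by a union bound. The dependence is handled entirely \emph{within} a class; the only cost of ignoring dependence \emph{across} classes is the union-bound factor, and this factor is precisely the origin of the $\log n$ in the denominator of the target exponent.

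Concretely, set $t=c\log n$ with $c$ the constant of (\ref{e21}). First I would discard the at most $c\log n$ indices $i$ with $x_i<c\log n$; writing $Y$ for the number of hit $x_i$, this alters both $Y$ and its target $\tfrac27 n$ by at most $2c\log n$, which is negligible against $a\ge\sqrt n\log n$, so it suffices to prove the estimate (with $a$ replaced by $a/2$) for the retained indices. I would then partition the retained indices by residue modulo $t$. Because $x_1<x_2<\cdots$ are integers, two indices in one class differ by at least $t$, hence the corresponding values differ by at least $t=c\log n\ge c\log m$, where $m$ is the class size; combined with every retained value being $\ge c\log n$, this is exactly the hypothesis of Lemma \ref{l22} for each class.

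Fix a class of size $m$ with values $y_1<\cdots<y_m$ and let $Z$ count those it hits. By Lemma \ref{l22}, each pattern $\nu\in\{H,N\}^m$ has probability $(2/7)^{h}(5/7)^{m-h}e^{\eps(\nu)}$ with $|\eps(\nu)|\le 1$; summing $e^{\lambda h(\nu)}$ against these probabilities and using $e^{\eps(\nu)}\le e$ gives, for every real $\lambda$,
$$
E\!\left[e^{\lambda(Z-\frac27 m)}\right]\;\le\; e\left(\tfrac27 e^{\frac57\lambda}+\tfrac57 e^{-\frac27\lambda}\right)^{m}\;\le\; e\cdot e^{\lambda^2 m/8},
$$
the last inequality being the standard bound for a sum of $m$ independent $[0,1]$-valued variables (cf.\ \cite{AS}). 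Optimizing $\lambda$ in Markov's inequality yields the one-class estimate $P\!\left(|Z-\tfrac27 m|\ge s\right)\le 2e\,e^{-2s^2/m}$ for all $s>0$. Since each class has $m\le\lceil n/t\rceil\le 2n/t$, taking $s=a/(2t)$ gives per class a bound $2e\,e^{-a^2/(4tn)}$.

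Finally I would combine the classes: if $\bigl|Y-\tfrac27 n'\bigr|\ge a/2$ (with $n'$ the number of retained indices), then since $Y-\tfrac27 n'$ is the sum over the $t$ classes of $Z-\tfrac27 m$, at least one class must deviate by $a/(2t)$, so a union bound gives
$$
P\!\left(\Bigl|Y-\tfrac27 n'\Bigr|\ge \tfrac a2\right)\;\le\; t\cdot 2e\,e^{-a^2/(4tn)}\;=\;2ec\log n\cdot e^{-a^2/(4cn\log n)}.
$$
The hypothesis $a\ge\sqrt n\log n$ forces $a^2/(n\log n)\ge\log n$, so the polynomial prefactor $2ec\log n$ is absorbed by shrinking the exponent's constant, producing $e^{-c'a^2/(n\log n)}$ for a suitable absolute $c'$ and all large $n$ (small $n$ being handled by adjusting $c'$). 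I expect the main difficulty to be purely in the bookkeeping: verifying that the integrality of the $x_i$ is exactly what forces the within-class separation needed to invoke Lemma \ref{l22}, and confirming that the union-bound loss of a factor $t=c\log n$ is precisely what the assumption $a\ge\sqrt n\log n$ is calibrated to absorb.
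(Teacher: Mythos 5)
Your proposal is correct and follows essentially the same route as the paper's proof: partition the indices into $c\log n$ residue classes so that within each class the values are separated by at least $c\log n$, invoke Lemma \ref{l22} to compare each class to a binomial (losing only a factor $e$), apply a Chernoff bound per class, and recombine by a union bound whose $\log n$ prefactor is absorbed using $a\ge\sqrt n\log n$. The only differences are cosmetic: you discard the few values below $c\log n$ up front (the paper notes each class contains at most one such element and calls it negligible) and you carry out the moment-generating-function computation explicitly where the paper cites the standard bound from \cite{AS}.
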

\begin{proof}
Split $x_1,\ldots,x_n$ into $c\log(n)$ subsequences,
where subsequence number $j$ consists of all $x_i$ with index
	$i \equiv j~ \mbox{mod}~(c \log n)$
where $c$ is the constant
from (\ref{e21}). Note that the difference between any two distinct
elements in the same subsequences is at least $c \log n$ and that
each of these subsequences  can contain at most one element
smaller than $c \log n$.
Each one of the subsequences
contains $r:=\frac{n}{c\log(n)}$ elements $x_i$.
In each subsequence, the probability to deviate in absolute
value from $\frac{2}{7}r$ hits  by more than $\frac{a}{c\log(n)}$
can be bounded by the
Chernoff’s bound for binomial distributions, up to
a factor of $e$. Indeed, Lemma \ref{l22} shows that
the contribution of each term does not exceed the
contribution of the corresponding term for the binomial random variable
with parameters $r$ and $2/7$
by more than a factor of $e$. Note that although each subsequence
may contain one element smaller than $c \log n$, the contribution of this
single element to the deviation is negligible and can be ignored.
Plugging in the standard bound, see, e.g.
\cite{AS}, Theorem A.1.16, we get that the probability of the
event considered is at most
$$
2e \cdot e^{-c^{\prime}\left(\frac{a}{c\log(n)}\right)^2/
\left(\frac{n}{c\log(n)}\right)} \leq
e^{-c^{\prime\prime}\frac{a^2}{n\log(n)}}
$$
for appropriate absolute constants $c'$, $c''$. Here we used the fact
that since $a$ is large the constant $2e$ can be swallowed by the choice
of $c''$.
Therefore, the probability to deviate in at least one of the
	subsequences by more than $a/(c \log n)$ is at most
$$ c\log(n)e^{-c^{\prime\prime}\frac{a^2}{n\log(n)}}
\leq e^{-c^{\prime \prime \prime}\frac{a^2}{n\log(n)}},
$$
where in the last inequality we used again the fact
that $a\geq\sqrt{n}\log(n)$.
\end{proof}

Recall that $L_k$ is the minimum $i$ so that $S$ hits $k$ primes
in the first $i$ steps.
\begin{coro}
	\label{c24}
	(1) If $\frac{2}{7}\pi(m_1)\leq k-a$ and
	$a \geq \sqrt{\pi(m_1)} \log (\pi(m_1))$,
then
\begin{align*}
&
P\left(Y_{m_1}\geq k \right)\leq
e^{-c^{\prime \prime \prime}\frac{a^2}{\pi(m_1)\log(\pi(m_1))}}.
\end{align*}
\bigskip

\noindent
(2) If $\frac{2}{7}\pi(m_2)\geq k+a$ and
	$a \geq \sqrt{\pi(m_2)} \log (\pi(m_2))$
then
\begin{align*}
&
P\left(Y_{m_2}\leq k \right)\leq e^{-c^{\prime
\prime \prime}\frac{a^2}{\pi(m_2)\log(\pi(m_2))}}.
\end{align*}
\end{coro}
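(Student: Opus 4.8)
The plan is to apply Proposition \ref{prop} directly to the increasing sequence consisting of all primes up to $m_i$, and then observe that each one-sided tail event for $Y_{m_i}$ is contained in the two-sided deviation event that the proposition controls. A reassuring point is that Proposition \ref{prop} is stated for an arbitrary increasing sequence of positive integers, with no spacing hypothesis (the required gaps were already arranged inside its proof by splitting into residue classes), so it applies to the sequence of primes verbatim, despite the fact that consecutive primes can be close together.

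For part (1), I would take $n=\pi(m_1)$ and let $x_1<x_2<\cdots<x_n$ be the list of all primes not exceeding $m_1$; with this choice $\#\{x_i\ \text{hit}\}$ is exactly $Y_{m_1}$, and the hypothesis $a\geq\sqrt{\pi(m_1)}\log(\pi(m_1))$ is precisely the condition $a\geq\sqrt{n}\log n$ needed to invoke the proposition. The one genuine step is the event inclusion: the assumption $\tfrac{2}{7}\pi(m_1)\leq k-a$ rearranges to $k\geq\tfrac{2}{7}\pi(m_1)+a$, so on the event $\{Y_{m_1}\geq k\}$ we have $Y_{m_1}-\tfrac{2}{7}\pi(m_1)\geq a$, hence $|Y_{m_1}-\tfrac{2}{7}n|\geq a$. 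Proposition \ref{prop} then bounds the probability of this larger event by $e^{-c'''a^2/(\pi(m_1)\log\pi(m_1))}$, which is the claimed estimate.

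For part (2) I would argue symmetrically with $n=\pi(m_2)$ and $x_1<\cdots<x_n$ the primes up to $m_2$, so again $\#\{x_i\ \text{hit}\}=Y_{m_2}$. Here the hypothesis $\tfrac{2}{7}\pi(m_2)\geq k+a$ gives $k\leq\tfrac{2}{7}\pi(m_2)-a$, so on $\{Y_{m_2}\leq k\}$ we have $\tfrac{2}{7}\pi(m_2)-Y_{m_2}\geq a$, again yielding $|Y_{m_2}-\tfrac{2}{7}n|\geq a$, and Proposition \ref{prop} finishes the estimate.

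There is essentially no obstacle here: the corollary is a bookkeeping specialization of the proposition. The only subtleties worth double-checking are that the lower bound $a\geq\sqrt{n}\log n$ required by the proposition holds with $n=\pi(m_i)$ (it does, being exactly the stated hypothesis) and that the one-sided tail is genuinely contained in the two-sided deviation event (it is, by the rearrangements above). The exponential constant is inherited unchanged from the proposition.
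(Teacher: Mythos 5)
Your proposal is correct and matches the paper's own proof essentially verbatim: both apply Proposition \ref{prop} to the sequence of all primes up to $m_i$ with $n=\pi(m_i)$, and both reduce the one-sided tail to the two-sided deviation event via the rearranged hypothesis $k-\tfrac{2}{7}\pi(m_1)\geq a$ (resp. $k-\tfrac{2}{7}\pi(m_2)\leq -a$). Your observation that the proposition requires no spacing hypothesis on the $x_i$ is a nice explicit note of a point the paper leaves implicit.
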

\begin{proof}
(1) The event ${\displaystyle \left\{Y_{m_1}\geq k\right\}}$
means that the number of primes that are at most $m_1$ and are hit
by the infinite sequence of the initial sums of dice rolls is a least $k$.
Therefore, if $\frac{2}{7}\pi(m_1)\leq k-a$, we have
\begin{align*}
&
P\left(Y_{m_1}\geq k \right)=P\left(Y_{m_1}-\frac{2}{7}\pi(m_1)
\geq k-\frac{2}{7}\pi(m_1) \right)\leq P\left(\Big|Y_{m_1}-\frac{2}{7}
\pi(m_1)\Big|\geq a \right)\leq e^{-c^{\prime \prime \prime}
\frac{a^2}{\pi(m_1)\log(\pi(m_1))}},
\end{align*}
where the last inequality follows from Proposition
\ref{prop}.
\bigskip

\noindent
(2) Similarly, if $\frac{2}{7}\pi(m_2)\geq k+a$, we have
\begin{align*}
&
P\left(Y_{m_2}\leq k \right)=P\left(Y_{m_2}-\frac{2}{7}\pi(m_2)
\leq k-\frac{2}{7}\pi(m_2) \right)\leq P\left(Y_{m_2}-\frac{2}{7}
\pi(m_2)\leq -a \right)\\
&
\leq
P\left(\Big|Y_{m_2}-\frac{2}{7}\pi(m_2)\Big|\geq a \right)
\leq e^{-c^{\prime \prime \prime}\frac{a^2}{\pi(m_2)\log(\pi(m_2))}},
\end{align*}
where the last inequality follows from Proposition
\ref{prop}.
\end{proof}

\begin{coro}
	\label{c25}
	(1) For a given (large) $k$, let $m_1$ be the smallest integer
	so that
$$\pi(m_1)=\lfloor \frac{7}{2}(k -2\sqrt k \log k \rfloor.$$
	Then for any $i$ satisfying
	$\frac{7}{2}i\leq m_1-a$, where
	$a \geq 2 \sqrt{k} \log (k)$,
\begin{align*}
&
P\left(L_k\leq i\right)\leq P\left(d_1+\cdots+d_i\geq m_1\right)
+P\left(Y_{m_1}\geq k\right)\leq e^{-c^{\prime \prime \prime\prime}
\frac{a^2}{i}}+e^{-c^{\prime \prime \prime}
	\frac{k \log^2 k}{\pi(m_1)\log(\pi(m_1))}} \leq k^{-\alpha}
\end{align*}
for some absolute constant $\alpha>0$.
\bigskip

\noindent
	(2) For a given (large) $k$ and for $a \geq \sqrt k \log^2 k$
	let $m_2$  be the smallest integer so that
	$$\pi(m_2)=\lceil \frac{7}{2} (k+a) \rceil.$$
	Then for any $i$ satisfying
	$\frac{7}{2}i\geq m_2+b$, where $b \geq a$
\begin{align*}
&
P\left(L_k\geq i\right)\leq P\left(d_1+\cdots+d_i\leq
m_2\right)+P\left(Y_{m_2}\leq k\right)\leq e^{-c^{\prime
\prime \prime\prime}\frac{b^2}{i}}+e^{-c^{\prime \prime
\prime}\frac{a^2}{\pi(m_2)\log(\pi(m_2))}}.
\end{align*}
\end{coro}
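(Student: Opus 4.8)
The plan is to reduce each tail estimate to the two ingredients already in hand: a Chernoff bound for the partial sum $s_i=d_1+\cdots+d_i$, whose mean is $\frac{7}{2}i$, and the estimates for $Y_m$ from Corollary~\ref{c24}. The conceptual heart of both parts is a union bound obtained by splitting according to whether $s_i$ has already passed the threshold $m_1$ (respectively $m_2$). I will use repeatedly the strict monotonicity of the sequence $(s_j)$: once the running sum exceeds a bound $m$, every prime $\le m$ that $S$ will ever hit has already been hit, so the number of primes $\le m$ hit during the first $i$ steps coincides with $Y_m$.

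For part~(1), I would first argue that $\{L_k\le i\}\subseteq\{s_i\ge m_1\}\cup\{Y_{m_1}\ge k\}$. Indeed, if $L_k\le i$ but $s_i<m_1$, then all $k$ primes hit during the first $i$ steps lie below $m_1$, so $S$ hits at least $k$ primes in $[m_1]$, i.e.\ $Y_{m_1}\ge k$; the union bound then yields the first displayed inequality. For the term $P(s_i\ge m_1)$, the hypothesis $\frac{7}{2}i\le m_1-a$ forces a deviation of at least $a$ above the mean $\frac72 i$, and since the $d_t$ are bounded the standard Chernoff/Hoeffding estimate for sums of independent bounded variables (\cite{AS}, Theorem~A.1.16) gives $P(s_i\ge m_1)\le e^{-c''''a^2/i}$. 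For $P(Y_{m_1}\ge k)$ I would invoke Corollary~\ref{c24}(1) with deviation parameter $a':=k-\frac27\pi(m_1)$, which by the definition of $m_1$ equals $(1+o(1))\,2\sqrt k\log k$.

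The step that requires genuine care is checking that this $a'$ meets the side condition $a'\ge\sqrt{\pi(m_1)}\log(\pi(m_1))$ of Corollary~\ref{c24}. Here I would use the Prime Number Theorem: since $\pi(m_1)=(1+o(1))\frac72 k$, one has $\sqrt{\pi(m_1)}\log(\pi(m_1))=(1+o(1))\sqrt{7k/2}\,\log k$, which is smaller than $2\sqrt k\log k$ for large $k$ because $\sqrt{7/2}<2$. With the condition verified, Corollary~\ref{c24}(1) gives the middle bound, and plugging in $a'^2=\Theta(k\log^2 k)$ and $\pi(m_1)\log(\pi(m_1))=\Theta(k\log k)$ makes the second exponent $\Theta(\log k)$. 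To control the first exponent I would also note, again via the Prime Number Theorem, that $m_1=(1+o(1))\frac72 k\log k$, so the constraint $\frac72 i\le m_1$ gives $i=O(k\log k)$ and hence $a^2/i=\Omega(\log k)$. Both terms are therefore at most $k^{-\alpha}$ for a suitable absolute $\alpha>0$, proving part~(1).

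Part~(2) is entirely parallel with the inequalities reversed. I would use $\{L_k\ge i\}\subseteq\{s_i\le m_2\}\cup\{Y_{m_2}\le k\}$: if $L_k\ge i$ and $s_i>m_2$, then by step $i$ the sequence has already decided every prime $\le m_2$, yet has accumulated fewer than $k$ primes, forcing $Y_{m_2}<k$ and hence $Y_{m_2}\le k$. The first term is bounded by the Chernoff estimate exactly as before, now using $\frac72 i\ge m_2+b$ to get a downward deviation of at least $b$, giving $e^{-c''''b^2/i}$. The second term is handled by Corollary~\ref{c24}(2) with deviation parameter $a$, whose side condition $a\ge\sqrt{\pi(m_2)}\log(\pi(m_2))$ follows from $\pi(m_2)=(1+o(1))\frac72 k$ together with the hypothesis $a\ge\sqrt k\log^2 k\gg\sqrt k\log k$. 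This yields the stated bound and completes the proof.
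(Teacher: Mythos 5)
Your proof is correct and follows essentially the same route as the paper's: the same event decomposition $\{L_k\le i\}\subseteq\{s_i\ge m_1\}\cup\{Y_{m_1}\ge k\}$ (and its mirror image for part (2)), a union bound, Chernoff for the partial sums, and Corollary \ref{c24} for the $Y_m$ terms, including the same verification that $2\sqrt k\log k\ge\sqrt{\pi(m_1)}\log(\pi(m_1))$ via $\sqrt{7/2}<2$; you are in fact somewhat more explicit than the paper in deriving the final $k^{-\alpha}$ bound. One small caveat: in part (2) you justify the side condition $a\ge\sqrt{\pi(m_2)}\log(\pi(m_2))$ by writing $\pi(m_2)=(1+o(1))\tfrac72 k$, which only holds when $a=o(k)$; since the corollary is stated for all $a\ge\sqrt k\log^2 k$ (and is later applied with $a_j=jk$), you should also check the regime $a=\Omega(k)$, where $\pi(m_2)\approx\tfrac72 a$ and the condition reduces to $a\ge\sqrt{7a/2}\,\log(7a/2)$, which holds for large $a$. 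Also note that $\pi(m_1)=(1+o(1))\tfrac72 k$ is immediate from the definition of $m_1$ and does not need the Prime Number Theorem, which is only required to estimate $m_1$ itself.
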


\begin{proof}
(1) If both events ${\displaystyle \left\{d_1+
\cdots+d_i\geq m_1\right\}}$ and
${\displaystyle \left\{Y_{m_1}\geq k\right\}}$ do not occur,
then the event ${\displaystyle \left\{L_k\leq
i\right\}}$ does not occur.
Therefore, for $\frac{7}{2}i\leq m_1-a$ we have
\begin{align*}
&
P\left(L_k\leq i\right)\leq P\left(d_1+\cdots+d_i\geq m_1\right)+
P\left(Y_{m_1}\geq k\right)\leq P\left(d_1+\cdots+d_i-\frac{7}{2}i
\geq a\right)+P\left(Y_{m_1}\geq k\right)\\
&
\leq
 e^{-c^{\prime \prime \prime\prime}\frac{a^2}{i}}+
e^{-c^{\prime \prime \prime}\frac{k \log^2 k}{\pi(m_1)\log(\pi(m_1))}},
\end{align*}
where the last inequality follows from
Chernoff's bound and the first part of Corollary \ref{c24}.
	Note that here $2 \sqrt k \log k \geq \sqrt{\pi(m_1)}
	\log (\pi(m_1))$ and therefore the corollary can be applied.
\bigskip

(2) Similarly, if both events ${\displaystyle \left\{d_1+\cdots
+d_i\leq m_2\right\}}$ and
${\displaystyle \left\{Y_{m_2}\leq k\right\}}$ do not occur,
then the event ${\displaystyle
\left\{L_k > i\right\}}$ does not occur.
Therefore, for $\frac{7}{2}i\geq m_2+b$, we have
\begin{align*}
&
P\left(L_k\geq i\right)\leq P\left(d_1+\cdots+
d_i-\frac{7}{2}i\leq -b\right)+P\left(Y_{m_2}\leq k\right)
\leq e^{-c^{\prime \prime \prime\prime}\frac{b^2}{i}}+
e^{-c^{\prime \prime \prime}\frac{a^2}{\pi(m_2)\log(\pi(m_2))}},
\end{align*}
where the last inequality follows again from
Chernoff's bound and the second part of Corollary \ref{c24}.
Indeed the corollary can be applied since it is not difficult
to check that for large $k$ and any $a \geq \sqrt k \log^2 k$,
	$$a \geq \sqrt {\pi(m_2)} \log (\pi(m_2))=
	\sqrt{\lceil \frac{7}{2}(k+a) \rceil} \log
	(\lceil \frac{7}{2}(k+a) \rceil). $$
\end{proof}
\bigskip
\vspace{0.2cm}

\noindent
{\bf Proof of Theorem \ref{t11}:}\,
Note that by the Prime Number Theorem in the first part of
Corollary \ref{c25}, $$m_1=(\frac{7}{2}+o(1)) k \log k.$$
Taking $a=2 \sqrt k \log k$ and letting $i_1$ be the largest integer
so that $\frac{7}{2} i \leq m_1 -a$
it follows from this first part that $i_1=(1+o(1)) k \log k$
and that the probability that $L_k$ is smaller than
$i_1$ is smaller than some negative power of $k$, that is, tends
to $0$ as $k$ tends to infinity.

Similarly, substituting in the second part of the corollary
$a = b=  \sqrt k \log^2 k$ and letting $i_2$ be the
smallest integer so that $\frac{7}{2}i \geq m_2 +a$ it is easy to see
that $i_2$ is also $(1+o(1))k \log k$
(since
$$m_2=(\frac{7}{2}+o(1)) k \log k,$$
by the Prime Number Theorem).  By the second
part of the corollary the probability that $L_k$ is larger than
$i_2$ is smaller than any fixed negative power of $k$, and hence
tends to $0$ as $k$ tends to infinity. Therefore
$L_k$ is $(1+o(1)) k \log k$ with probability tending to $1$ as
$k$ tends to infinity, completing the proof of
the theorem. \hfill $\Box$
\vspace{0.2cm}

\noindent
{\bf Proof of Theorem \ref{t12}:}\,
The expectation of $L_k$ is the sum over all positive integers
$i$, of the probabilities $P(L_k \geq i)$. Taking $a= \sqrt k \log^2 k$
and defining $m_1$ and $m_2$ as before we break this sum into
three parts,
$$
S_1=\sum_{i:\frac{7}{2}i\leq m_1-a }P\left(L_k\geq i\right),
$$
$$
S_2=\sum_{i: \frac{7}{2}i \geq m_2+a}P\left(L_k\geq i\right),
$$
and
$$
S_3=\sum_{i: m_1-a < \frac{7}{2}i < m_2+a}P\left(L_k\geq i\right).
$$
By the first part of Corollary \ref{c25} each summand
in the first sum $S_1$ is
$1-o(1)$ and therefore $S_1=(1+o(1)) k \log k$, as the number
of summands is $(1+o(1)) k \log k$, since $m_1=(\frac{7}{2} +o(1))
k \log k.$
By the second part of the corollary (applied to an
appropriately chosen
sequence of $a, m_2$ and $b$)
it is not difficult to check
that the infinite
sum $S_2$ is only $o(1)$. Indeed, it is possible, for example,
to choose $a_0=\sqrt k \log^2 k$ and $a_j=jk$ for all $j \geq 1$.
The corresponding value $m_{2,j}$ of $m_2$ for each $a_j$ is defined as the
smallest integer satisfying $\pi(m_{2,j})=\lceil \frac{7}{2} (k+a_j) \rceil$.
Taking $b_j = a_j$ we can apply the estimate in the second part of
the corollary to all values of $i$ satisfying
$m_{2,j}+a_j \leq \frac{7}{2}i < m_{2,j+1}+a_{j+1}$.
The sum of the probabilities $P(L_k \geq i)$ for these values of $i$
is thus at most $k e^{-\Omega(\log^3 k)}$ for $j=0$, and at most
$k e^{-\Omega(j k/\log (jk))}$ for each $j \geq 1$.
The sum of all these quantities is smaller than any fixed negative
power of $k$, and is therefore $o(1)$, as needed.

The sum $S_3$ is a sum of at most $m_2-m_1+2a$ terms, and each
of them is at most $1$, implying that $0 \leq S_3 \leq
m_2-m_1+2a=o(k \log k)$, since both $m_1$ and $m_2$ are
$(\frac{7}{2}+o(1)) k \log k$, and $2a=O(\sqrt k \log^2 k)$.
This completes the proof of the theorem. \hfill $\Box$

\section{Concluding remarks and extensions}
\begin{itemize}
\item
{\bf Extensions for biased $r$-sided dice and arbitrary
subsets of the integers.}  The proofs in the previous section
use very little of the specific properties of the primes and
the specific distribution of each $d_i$. It is easy to
extend the result to any $r$-sided dice with an arbitrary discrete
distribution on $[r]$ in which the values obtained with positive
probabilities do not have any nontrivial common divisor. The constants
$3.5$ and $2/7$ will then have to be replaced by the expectation of
the random variable $d_i$ and by its reciprocal, respectively.
It is interesting to note that while for different dice the expectation
of $L_k$ for small values of $k$ can be very different from the
corresponding expectation for a standard fair die, once the die
is fixed, for large $k$ the expectation
is always $(1+o(1))k \log k$, where the $o(1)$-term tends
to $0$ as $k$ tends to infinity.

It is also possible to replace the primes by an arbitrary subset $T$
of the
positive integers, and repeat the arguments to analyze the
corresponding random variable for this case, replacing the Prime
Number Theorem by the counting function of $T$. We omit the details.
\item
{\bf Heuristic suggestion for a more precise expression for
$E(L_k)$.} If we substitute for $\pi(n)$ its approximation
$n/\log n$ and repeat the analysis described here
with this approximation, the more precise
value for the expectation $E(L_k)$ that follows
is $k (\log  k + \log \log k +O(1))$. Since at the
beginning there are some fluctuations, we tried
to add another constant and consider an expression
of the form
$k (\log  k + \log \log k +c_1)+c_2$. Choosing $c_1$ and $c_2$ that
provide the best fit for our (limited and therefore maybe overfitted) computational
evidence we obtained the heuristic expression $f(k)=k (\log  k + \log \log k +0.543)+8.953.$
For the record, here are the ratios of $E[L_k]/f(k)$ for $k=20,40,60,80,100$, respectively:

$0.9861651120, 0.9976101939, 0.9966486957, 0.998338113, 0.9997448512$.

One can also replace $n/\log n$ by the more precise
approximation $Li(n)$ for $\pi(n)$, but the difference
between these two estimates does not change the
expression obtained for $E(L_k)$ in a significant way.

\end{itemize}

\noindent
{\bf Acknowledgment}
Noga Alon is supported in part by NSF grant DMS-2154082.
Yaakov Malinovsky is supported in part by BSF grant 2020063.
Lucy Martinez is supported by the NSF Graduate Research Fellowship Program under Grant No. 2233066.
YM expresses gratitude to Stanislav Molchanov for suggesting that the hits on squares asymptotically follow a success probability of $1/3.5$.

\end{document}